\documentclass[reqno,12pt]{amsart}
\usepackage[normalem]{ulem}
\usepackage{amssymb}
\usepackage{hyperref}
\usepackage{graphicx}
\usepackage{empheq}
\usepackage{pstricks}
\hypersetup{colorlinks,linkcolor={black},citecolor={blue},urlcolor={blue}}








\newtheorem{lem}{Lemma}[section]

\newtheorem{thm}{Theorem}[section]

\newtheorem{df}{Definition}[section]
\newtheorem{rmq}{Remark}[section]

\newtheorem{obs}{\sc Remark}[section]
\newcommand\norm[1]{\left\| #1\right\|}




\theoremstyle{definition}





\theoremstyle{remark}


\numberwithin{equation}{section}


\newcommand{\R}{\mathbb{R}}





\makeatletter
\renewcommand{\author}[2][]{%
  \def\@tempa{#1}
  \ifx\@empty\authors
    \ifx\@tempa\@empty
      \gdef\shortauthors{#2}%
    \else
      \gdef\shortauthors{#1}%
    \fi
    \gdef\authors{\author{#2}}%
  \else
    \ifx\@tempa\@empty
      \g@addto@macro\shortauthors{\and#2}%
    \else
      \g@addto@macro\shortauthors{\and#1}%
    \fi
    \g@addto@macro\authors{\and\author{#2}}%
  \fi
}
\renewcommand{\address}[2][]{\g@addto@macro\authors{\address{#1}{#2}}}
\def\@setauthors{%
  \begin{center}%
    \footnotesize
    \vspace{20pt}
    \let\and\@empty
    \def\author##1{\advance\@tempcnta\@ne}%
    \def\address##1##2{\advance\@tempcntb\@ne}%
    \@tempcnta=\z@  \@tempcntb=\z@
    \authors
    \ifnum\@tempcnta>\@ne \ifnum\@tempcntb=\@ne
        \oneaddress
      \else
        \sepaddresses
      \fi
    \else
      \oneaddress
    \fi
  \end{center}%
}
\def\oneaddress{%
  \begingroup
  \let\author\@iden \let\address\@gobbletwo
  \renewcommand{\andify}{%
    \nxandlist{\unskip, }{\unskip{} and~}{\unskip, and~}}%
  \uppercasenonmath\authors
  \andify\authors
  \authors
  \endgroup
  \begingroup \let\and\relax \let\author\@gobble
  \def\address##1##2{\unskip\\[10pt] \itshape##2}%
  \authors
  \endgroup
}
\def\sepaddresses{%
  \begingroup
    \baselineskip10\p@\relax
    \def\address##1##2{ ({\itshape##2}\/)}
    \def\author##1{\def\temp{##1}\leavevmode\uppercasenonmath\temp\temp}%
    \nxandlist
      {,\\[\baselineskip]}
      {\\[\baselineskip] \textsc{\lowercase{and}}\\[\baselineskip]}
      {,\\[\baselineskip]\textsc{\lowercase{and}}\\[\baselineskip]}
      \authors 
    \authors
  \endgroup
}
\def\maketitle{\par
  \@topnum\z@
  \@setcopyright
  \thispagestyle{firstpage}%
  \uppercasenonmath\shorttitle
  \ifx\@empty\shortauthors \let\shortauthors\shorttitle
  \else
    \newcommand{\@xuppercasenonmath}[1]{\toks@\@emptytoks
      \@xp\@skipmath\@xp\@empty##1$$%
      \edef##1{\@nx\protect\@nx\@upprep\the\toks@}}%
    \@xuppercasenonmath\shortauthors
    \def\@@and{AND}
    \renewcommand{\andify}{%
      \nxandlist{\unskip, }{\unskip{ }\@@and{ }}{\unskip, \@@and{ }}}%
    \andify\shortauthors
  \fi
  \@maketitle@hook
  \begingroup
  \@maketitle
  \endgroup
  \c@footnote\z@
  \@cleartopmattertags
}
\def\@maketitle{%
  \normalfont\normalsize
  \let\@makefntext\noindent
  \@adminfootnotes
  \ifx\@empty\addresses\else \@footnotetext{\@setotheraddresses}\fi
  \global\topskip68\p@\relax
  \@settitle
  \ifx\@empty\authors \else \@setauthors \fi
  \ifx\@empty\@dedicatory
  \else
    \baselineskip26\p@
    \vtop{\centering{\footnotesize\itshape\@dedicatory\@@par}%
      \global\dimen@i\prevdepth}\prevdepth\dimen@i
  \fi
  \toks@\@xp{\shortauthors}\@temptokena\@xp{\shorttitle}%
  \edef\@tempa{\@nx\markboth{\the\toks@}{\the\@temptokena}}\@tempa
  \@setabstract
  \normalsize
  \if@titlepage
    \newpage
  \else
    \dimen@34\p@ \advance\dimen@-\baselineskip
    \vskip\dimen@\relax
  \fi
} 
\renewcommand{\thanks}[1]{%
  \ifx\@empty\thankses
    \gdef\thankses{\thanks{#1}}%
  \else
    \g@addto@macro\thankses{\endgraf\thanks{#1}}%
  \fi}
\def\@setthanks{\def\thanks##1{\noindent##1\@addpunct.}\thankses}
\renewcommand{\curraddr}[2][]{%
  \ifx\@empty\addresses
    \gdef\addresses{\curraddr{#1}{#2}}%
  \else
    \g@addto@macro\addresses{\endgraf\curraddr{#1}{#2}}%
  \fi}
\renewcommand{\email}[2][]{%
  \ifx\@empty\addresses
    \gdef\addresses{\email{#1}{#2}}%
  \else
    \g@addto@macro\addresses{\endgraf\email{#1}{#2}}%
  \fi}
\renewcommand{\urladdr}[2][]{%
  \ifx\@empty\addresses
    \gdef\addresses{\urladdr{#1}{#2}}%
  \else
    \g@addto@macro\addresses{\endgraf\urladdr{#1}{#2}}%
  \fi}
\def\@setotheraddresses{%
  \def\curraddr##1##2{\noindent
    \emph{Current address\@ifnotempty{##1}{ of ##1}}:\space
      ##2\@addpunct.}%
  \def\email##1##2{\noindent
    \emph{E-mail address\@ifnotempty{##1}{ of ##1}}:\space
      \texttt{##2}}%
  \def\urladdr##1##2{\noindent
    \emph{WWW address\@ifnotempty{##1}{ of ##1}}:\space
      \texttt{##2}}%
  \addresses
}
\let\enddoc@text\relax
\makeatother

\begin{document}
\title[Approximate controllability of semi-linear heat equation]{Approximate controllability of semi-linear heat equation with non instantaneous impulses, memory and delay }

\author[H. LEIVA, W. Zouhair, M.N. Entekhabi and L. Delgado]{H. Leiva$^{1}$, W. Zouhair$^{2}$, M. N. Entekhabi$^{3}$ and E. L. Delgado$^{4}$  }

\address{$^{1}$ School of Mathematical Sciences and Information Technology,Universidad Yachay Tech, 
         San Miguel de Urcuqui, Ecuador, \\hleiva@yachaytech.edu.ec, hleiva@ula.ve}
         
\address{$^{2}$ Laboratory of Mathematics and Population Dynamics
Marrakesh
        BP 2390, Marrakesh 40000, Morocco, Cadi Ayyad University, Faculty of Sciences Semlalia, Maroc\\walid.zouhair.fssm@gmail.com} 
\address{ $^{3}$ Florida A and M University
         Department of Mathematics 
        Tallahassee, Florida 32307- USA\\ mozhgan.entekhabi@famu.edu}

\address{ $^{4}$ Faculty of Sciences, Polytechnic School of Chimborazo, Riobamba, Ecuador, \\ elucena@espoch.edu.ec}

\begin{abstract}
The semilinear heat equation with non instantaneous impulses \textbf{(NII)}, memory and delay is considered and its approximate controllability is obtained. This is done by employing a technique that avoids fixed point theorems and pulls back the control solution to a fixed curve in a short time interval. We demonstrate, once again, that the controllability of the system is robust under the influence of  non instantaneous impulses, memory and delays. Finally, we present some open problems and a possible general framework to study the controllability of non instantaneous impulses semilinear systems.
\end{abstract}
\subjclass[2010]{93B05, 34G20, 35R12}
\keywords{Semilinear Heat equation, Non instantaneous impulsive, Approximate Controllability, Evolution equation with memory and Delay }

\maketitle



\section{Introduction}
The theory of impulsive dynamical systems was initiated by V.D. Mil'man and A. Mishkis in 1960 \cite{Milman}. Afterwards, it has become an important field of investigation in several areas.
They can be found in applications ranging from neural networks, ecology, chemistery, biotechnology, radiophysics, theoretical physics, mathematical economy and engineering. The interest of this article is the non-instantaneous impulsive
semi-linear system involving memory and state-delay, which is motivated by applications, such as species population, nanoscale electronic circuits consisting of single-electron tunneling junctions, and mechanical systems with impacts \cite{Lakshmikantham,Samoilenko,Yang}. In general, impulses represent sudden deviations of the states at specific times, by either instantaneous jumps or continuous intervals.

In real life problems, the impulse starts abruptly at a certain moment of time and remains active on a finite time interval. However, the time of the action is little. Such an impulse is known as non-instantaneous impulse. This notion appears for the first time in 2012. After that, it has become an area of interest for many researchers. For more, we refer to our readers \cite{SKSMA,VKMM, RPASHDO,Zouhair,Zouhair2}.

The phenomenon of impulses implies instantaneous and discontinuous changes at different instants of time, which influence the solutions and can lead to the instability (respectively uncontrollability) of the differential equation or conversely to its stability (respectively controllability), which explains the evolution of this theory has been rather
slow due to the complexity of handling such equations (see \cite{liu}, \cite{farzana} ). Afterwards, many scientists contributed in the enrichment of this theory, they launched different studies on this subject and  large number of results were established .

Controllability is a mathematical problem, which consists of finding a controls steering the system from an arbitrary initial state to a final state in a finite interval of time, the controllability of instantaneous impulsive systems have been extensively studied in the literature, see \cite{SQG,KDP,GL,DULC,Zouhair3,Zouhair4}. To the best of authors’s knowledge, there is no paper which deals with semilinear heat equation with memory and delay in the presence of non instantaneous impulsive. Motivated by the above facts, we study the approximate controllability for the following system
{\footnotesize
\begin{empheq}[left = \empheqlbrace]{alignat=2}
\begin{aligned}\label{1.1}
&\frac{\partial \omega}{\partial{t}} + A \omega = \mathbf{1}_{\theta} u(t,x)+ \int_{0}^{t} M(t,s)  && \text{in } \bigcup\limits_{i=0}^{N}\left(s_{i}, t_{i+1}\right] \times [0,\pi],\\
& g(\omega(s-r,x))\, \mathrm{d} s + f(t,\omega(t-r,x),u(t,x)),\\
&\omega(t,0) = \omega(t,\pi)= 0,  && \text{on}\quad (0, T),\\
&\omega(s,x) = h(s,x), &&\text{in}\quad [-r,0] \times [0,\pi],\\
&\omega(t,x)= G_{i}(t,\omega(t,x),u(t,x)),  && \text{in}\quad \bigcup\limits_{i=0}^{N}\left(t_{i}, s_{i}\right]\times [0,\pi],
\end{aligned}
\end{empheq}  
}
where $0=s_{0}=t_{0} <t_{1} \leq s_{1} < ...< t_{N} \leq s_{N} < t_{N+1} = T $ are fixed real numbers,
$h: \,[-r,0] \times [0,\pi]\longrightarrow \R$ is a piecewise continuous function in $s$, $f:[0,T] \times \R \times \R \longrightarrow \R$ represents the non-linear
perturbation of the differential equation in the system  and the non-instantaneous impulses are represented by $G_{i}:\left(t_{i}, s_{i}\right] \times \R \times \R \longrightarrow \R$. $A: \mathcal{D}(A) \subset \mathcal{X} \longrightarrow \mathcal{X}$ is the operator $A \psi = - \psi_{xx}$ with domain $
\mathcal{D}(A) := \lbrace \psi \in \mathcal{X} :\psi, \psi_{x} \mbox{ absolutely continuous}, \psi_{xx} \in \mathcal{X},\,\, \psi(0)=\psi(\pi) = 0 \rbrace,
$ such that $\mathcal{X} = L^{2}[0,\pi]$ and $(\mathcal{D}(A))^{1/2} = \mathcal{X}^{1/2}$,  $\theta$ is an open nonempty subset of $[0,\pi],$  $\mathbf{1}_{\theta}$ denotes the characteristic function of the set $\theta.$

In general, the effect of such pulses in the behaviour of solutions is presented as follows
\begin{center}
\includegraphics[scale=0.6]{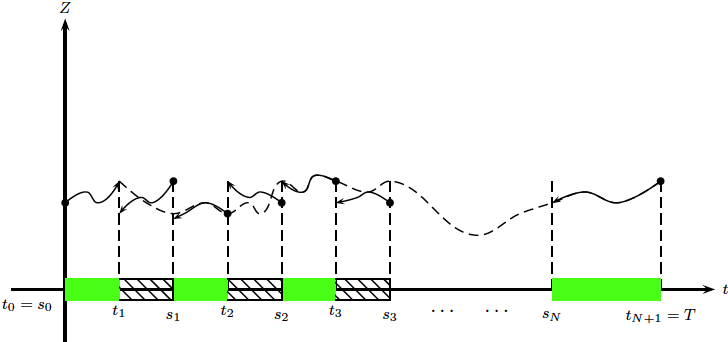}\\
\textit{Example of the effect of non-instantaneous pulses \textbf{(NII)}.}\par\medskip
\end{center}
where the green part of the figure represents the impulsive behavior, it starts at instant $s_k$ and remains active until the instant $t_k$.

This paper is organized as follows. In section \ref{sec2}, we briefly present the problem formulation and related definition. In section \ref{sec3}  and \ref{sec4},  we discuss the approximation controlability for the linear and the semilinear system. The last section is devoted to some related open problems and application.

\section{Abstract Formulation of the Problem}\label{sec2}
In this section, we recall some results that will be useful in the sequel. It is well known that  $-A: \mathcal{D}(A) \subset \mathcal{X} \longrightarrow \mathcal{X},$ is the generator of a  strongly continuous analytic semigroup  $(S(t))_{t\geq 0}$ on $\mathcal{X}.$ Moreover, the operator $A$ and the semigroup $(S(t))_{t\geq 0}$ can be represented as follows.
$$
 Ax = \sum_{n = 1}^{\infty} \lambda_{n} \langle x, \phi_{n} \rangle \phi_{n}, \ \ x \in \mathcal{X},
$$
where $\lambda_{n}= n^2$, $\phi_{n}(\xi) = \sin(n \xi)$ and $\langle \cdot, \cdot \rangle$ is the inner product in $\mathcal{X}$. Also, the strongly continuous semigroup $\left(S(t)\right)_{t\geq 0}$ generated by $A$ is compact and presented by
$$
S(t)x =  \sum_{n = 1}^{\infty} e^{- n^2 t}\langle x, \phi_{n} \rangle \phi_{n}, \ \ x \in \mathcal{X}.
$$
Then, we have the following estimation.
$$
\parallel S(t)\parallel\leq e^{-t},\quad t\geq0.
$$

On the other hand, we rewrite system \eqref{1.1} as an abstract differential equations with memory as follows.
\begin{empheq}[left = \empheqlbrace]{alignat=2}
\begin{aligned}\label{2.1}
&\frac{\partial \omega}{\partial{t}} + A \omega = \mathbf{B}_{\theta} u+ \int_{0}^{t} M(t,s)  && \text{in } \bigcup\limits_{i=0}^{N}\left(s_{i}, t_{i+1}\right],\\
& g^{1}(\omega_{s}(-r)) \mathrm{d} s\,+ f^{1}(t,\omega_{t}(-r),u(t)),\\
&\omega(s) = h(s), &&\text{in}\quad [-r,0],\\
&\omega(t,x)=  G_{i}^{1}(t,\omega(t),u(t)),  && \text{in}\quad \bigcup\limits_{i=0}^{N}\left(t_{i}, s_{i}\right],
\end{aligned}
\end{empheq} 
where $u \in L^{2}([0, T] ;\mathcal{U}),$ such that $\mathcal{U}= \mathcal{X}$, $ B_{\theta}: \mathcal{U} \longrightarrow \mathcal{X} $ is a bounded linear operator defined by
$B_{\theta} u = \mathbf{1}_{\theta} u$,  $\omega_{t}$ stands for the translated function of  $\omega$ defined by $\omega_{t}(s) = \omega(t+s)$, with $s \in [-r, 0]$ and the functions $\displaystyle g^{1}: L^{2}[0,\pi] \longrightarrow L^{2}[0,\pi],\, G_{i}^{1}:(t_{i}, s_{i}] \times \mathcal{X} \times \mathcal{U} \longrightarrow L^{2}[0,\pi]$ and $f^{1}:[0, T] \times \mathcal{PW} \times \mathcal{U} \longrightarrow L^{2}[0,\pi],$ are defined by
\begin{equation*}
\begin{array}{lll}
g^{1}(\omega_{t}(-r))(x)&=& g(\omega(t-r,x)),\\[2mm]
f^{1}(t, \omega_{t}(-r), u)(x)&=& f(t,\omega(t-r), u(t,x)),\\[2mm]
G_{i}^{1}(t, \omega(t), u(t))(x)&=& G_{k}(t, \omega(t,x), u(t,x)) \quad \textit{for}\quad i=0,...,N,
\end{array}
\end{equation*}
where $\mathcal{PW}$ is  the space of piecewise continuous functions given by
\begin{equation*}
\mathcal{PW}=\left\{h:[-r, 0] \longrightarrow \mathcal{X}^{1/2}: h \quad \text { is  piecewise continuous }\right\},
\end{equation*}
endowed with the norm
$$
\|h \| = \max \{\|h(t) \|_{\mathcal{X}}: -r \leq t \leq 0 \}.
$$

Next, we introduce the following function
$$
f^{2}:[0, T] \times \mathcal{PW} \times \mathcal{U} \longrightarrow \mathcal{X},
$$
given by
\begin{equation*}
\begin{array}{lll}
\displaystyle f^{2}(t,\omega,u) &=& \displaystyle \int_{0}^{t} M(t,s) g^{1}(\omega_{s}(-r)) \,\mathrm{d}s\, + f^{1}(t,\omega_{t}(-r),u(t)).
\end{array}
\end{equation*}
Then, from system \eqref{2.1},  we obtain the following non-autonomous differential equation with non-instantaneous impulses
\begin{empheq}[left = \empheqlbrace]{alignat=2}
\begin{aligned}\label{2.2}
&\frac{\partial \omega}{\partial{t}} + A \omega = \mathbf{B}_{\theta} u+f^{2}(t,\omega,u)   && \text{in } \bigcup\limits_{i=0}^{N}\left(s_{i}, t_{i+1}\right],\\
&\omega(s) = h(s), &&\text{in}\quad [-r,0],\\
&\omega(t,x)=  G_{i}^{1}(t,\omega(t),u(t)),  && \text{in}\quad \bigcup\limits_{i=0}^{N}\left(t_{i}, s_{i}\right].
\end{aligned}
\end{empheq}
We consider the space $\mathcal{P C}(\mathcal{X})$  of all functions $\varphi:\,\, [-r,T] \longrightarrow \mathcal{X} $ such that $\varphi(\cdot)$ is piecewise continuous on $[-r, 0]$ and continuous on $[0,T]$ except at points $t_{i}$ where the side limits $\varphi (t_{i}^{-})$ and  $\varphi (t_{i}^{+})$ exist, and $\varphi (t_{i}^{-}) = \varphi (t_{i})$ for all $i = 1,2,...,N,$ endowed with the uniform norm denoted by $\|\cdot\|_{\mathcal{P C}(\mathcal{X})}.$

\begin{df}
A function $\omega(\cdot)\in \mathcal{P C} (\mathcal{X}) $ is called a mild solution for the system \eqref{2.2}  if it satisfies the following integral-algebraic equation
\begin{equation}\label{Mild}
\omega(t)= \left\{
\begin{array}{lll}
  h(t), &t\in [-r,0], &\\[3mm]
 S(t)h(0) +\displaystyle \int_{0}^{t} S(t-s) \big(\mathbf{B}_{\theta} u(s)+ f^{2}(s,\omega,u(s))\big)\,\mathrm{d}s,  &t \in [0,t_{1}],&\\[3mm]
 G_{i}^{1}(t,\omega(t),u(t)), &t \in (t_{i},s_{i}], i\in I_{N},\\[3mm]
\displaystyle S(t-s_{i})G_{i}^{1}(s_{i},\omega(s_{i}),u(s_{i})) + \int_{s_{i}}^{t} S(t-s) \mathbf{B}_{\theta} u(s)\,\mathrm{d}s &t \in\left(s_{i}, t_{i+1}\right],\, i\in I_{N},& \\
 +\displaystyle \int_{s_{i}}^{t} S(t-s) f^{2}(s,\omega,u(s))\,\mathrm{d}s.
\;\;\;\;\\
\end{array}
\right.
\end{equation}
where $I_{N}$ denote the set $\{1,\cdots,N\}$.
\end{df}
The problem of the existence of a solution for semi-linear differential system under non-instantaneous impulses and delays has attracted many researchers. For instant, In finite dimensional Banach space the existence and uniqueness of solutions for semi-linear differential system under non-instantaneous impulses and delays are obtained by applying  Karakostas' fixed point theorem, see \cite{Zouhair2}, and for infinite dimension space we invite interested readers to see  \cite{Zouhair,Pierri,AK,HO}.

In this paper, we are interested in proving that the semilinear heat equation with non instantaneous impulses, memory and delay \eqref{1.1}  is approximately controllable on $[0,T]$. In this regard, we assume for the rest of this paper that $M \in L^{\infty}\big([0,T] \times [0,\pi]\big),$ and the nonlinear functions $f^{2}, g^{1}$ and $ G^{1}_{i}$ are smooth enough so that, for all $h \in \mathcal{PW}$ and $u \in L^{2}\left([0, T] ; \mathcal{U}\right)$ the problem \eqref{2.2} admit only one mild solution on $[-r,T]$.
\section{Approximate Controllability of the Linear Equation}\label{sec3}
In this section, we shall present some characterization of the approximate controllability for a general linear system in Hilbert spaces, then we prove for the better understanding of the readers the approximate controllability of the linear heat equation in any interval $[T-l, T]$, $l>0$ using the representation of the semigroup $(S(t))_{t\geq 0}$ generated by $A,$ and the fact that $\phi_{n}(\xi)= \sin{n\xi}$ are analytic functions. To this end, we note that, for all $\omega_{0} \in \mathcal{X},$ $0 \leq t_{0} \leq T$ and $u \in L^{2}(0,T;\mathcal{U})$ the initial value problem
\begin{equation}\label{3.1}
\left\{
\begin{array}{lll}
\displaystyle \frac{\partial \omega}{\partial{t}} = -A \omega +\mathbf{B}_{\theta} u, &\omega \in \mathcal{X},& \\\\
\displaystyle\omega(t_{0}) = \omega_{0},
\;\;\;\;\\
\end{array}
\right.
\end{equation}
\\
admits only one mild solution given by

\begin{equation*}
\omega(t)= S \left(t-t_{0}\right) \omega_{0}+\int_{t_{0}}^{t} S(t-s) B_{\theta} u(s)\, \,\mathrm{d}s, \quad t \in\left[t_{0}, T\right].
\end{equation*}
\begin{df}
For $l \in [0,T)$  we define the controllability map for the system \eqref{3.1} as follows
\begin{equation*}
\begin{array}{rl}
& G_{Tl}: L^{2}(T-l, T ; \mathcal{U})  \longrightarrow \mathcal{X} \\
&\displaystyle  G_{Tl}(v) = \int_{T-l}^{T} S(T-s) B_{\theta} v(s) \,\mathrm{d}s.
\end{array}
\end{equation*}
It's adjoint operator
\begin{equation*}
\begin{aligned}
&G_{T l}^{*}: \mathcal{X}  \longrightarrow L^{2}(T-l, T ; \mathcal{U}) \\[2mm]
& G_{T l}^{*}(x) = B_{\theta}^{*} S^{*}(T-t) x, \quad t \in[T-l, T].
\end{aligned}
\end{equation*}
Therefore, the Grammian operator $Q_{T l}: \mathcal{X}  \longrightarrow  \mathcal{X}  $ is defined by
\begin{equation*}
Q_{Tl}=G_{T l} G_{T l}^{*}=\int_{T- l}^{T} S(T-t) B_{\theta} B_{\theta}^{*} S^{*}(T-t) \,\mathrm{d} t.
\end{equation*}
\end{df}
The following lemma holds in general for a linear bounded operator $G: W \longrightarrow Z$ between Hilbert spaces $W$ and $Z.$
\begin{lem}(see \cite{CP,CZ,LMS})
The equation \eqref{3.1} is approximately controllable on $[T-l, T ]$ if, and only if, one of the following statements holds:
\\[1.5mm]
a.$ \quad \overline{\operatorname{Rang}\left(G_{T l}\right)}=\mathcal{X} ,$\\[1.5mm]
b.$ \quad B_{\theta} ^{*}S^{*}(T-t)x={0}, \quad t \in [T-l,T]  \longrightarrow x =0,$\\[1.5mm]
c. $\left\langle Q_{T l} x, x\right\rangle>0,\quad x \neq 0 \quad$ in $\quad \mathcal{X},$\\[1.5mm]
d. $\lim _{\alpha \longrightarrow 0^{+}} \alpha\left(\alpha I+Q_{T l}\right)^{-1} x=0, \quad \forall\,\, x \in \mathcal{X}.$
\end{lem}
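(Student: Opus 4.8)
The plan is to prove the equivalence of statements (a)--(d), which is a standard functional-analytic result about approximate controllability characterized through the range of the controllability map $G_{Tl}$ and its associated Grammian. The backbone of the argument is the general fact, valid for any bounded linear operator $G:W\to Z$ between Hilbert spaces, that $\overline{\operatorname{Rang}(G)}=(\ker G^{*})^{\perp}$. First I would establish the link between approximate controllability and (a): since the mild solution at time $T$ starting from $\omega_0$ is $\omega(T)=S(l)\omega_0+G_{Tl}(v)$, the set of reachable states from a fixed $\omega_0$ is $S(l)\omega_0+\operatorname{Rang}(G_{Tl})$, and approximate controllability means we can get within $\varepsilon$ of any target $\omega^1$; after translating by the fixed vector $S(l)\omega_0$ this is exactly the density of $\operatorname{Rang}(G_{Tl})$ in $\mathcal{X}$, i.e. statement (a).

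Next I would derive the equivalence (a)$\iff$(b). Using the orthogonal-complement identity above with $G=G_{Tl}$ and $Z=\mathcal{X}$, density of the range is equivalent to $\ker(G_{Tl}^{*})=\{0\}$. Then I would simply unwind the definition of the adjoint: $G_{Tl}^{*}(x)(t)=B_{\theta}^{*}S^{*}(T-t)x$ as an element of $L^2(T-l,T;\mathcal{U})$, so $G_{Tl}^{*}x=0$ in that space means $B_{\theta}^{*}S^{*}(T-t)x=0$ for almost every $t\in[T-l,T]$. Here I would invoke the analyticity of the semigroup $(S(t))_{t\ge0}$ (equivalently of the eigenfunctions $\phi_n(\xi)=\sin n\xi$) together with strong continuity to upgrade the ``almost every $t$'' to ``every $t$'', so that the condition reads precisely as in (b). This gives (a)$\iff$(b).

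For (b)$\iff$(c), I would compute $\langle Q_{Tl}x,x\rangle=\langle G_{Tl}G_{Tl}^{*}x,x\rangle=\|G_{Tl}^{*}x\|^{2}=\int_{T-l}^{T}\|B_{\theta}^{*}S^{*}(T-t)x\|^2\,\mathrm{d}t$. Thus positivity of $\langle Q_{Tl}x,x\rangle$ for all $x\neq0$ is equivalent to the vanishing of $B_{\theta}^{*}S^{*}(T-t)x$ on $[T-l,T]$ forcing $x=0$, which is exactly (b). Finally, for (c)$\iff$(d), I would use that $Q_{Tl}$ is a bounded, self-adjoint, nonnegative operator, so its spectral resolution applies: the family $\alpha(\alpha I+Q_{Tl})^{-1}$ is well-defined for $\alpha>0$ and, via the spectral theorem, the strong limit $\lim_{\alpha\to0^{+}}\alpha(\alpha I+Q_{Tl})^{-1}x$ equals the orthogonal projection of $x$ onto $\ker(Q_{Tl})$; since $\ker(Q_{Tl})=\{x:\langle Q_{Tl}x,x\rangle=0\}$ by nonnegativity, this limit is zero for all $x$ precisely when $Q_{Tl}$ is strictly positive, i.e. (c).

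I expect the main obstacle to be the passage from the almost-everywhere vanishing condition to the everywhere condition in step two, and more subtly the rigorous justification of the spectral-theoretic limit in (d): the identity $\lim_{\alpha\to0^{+}}\alpha(\alpha I+Q)^{-1}=P_{\ker Q}$ in the strong operator topology must be argued through the spectral measure of $Q$ (splitting the integral near the origin of the spectrum from the part bounded away from zero), rather than treated as obvious. The remaining implications are essentially bookkeeping with adjoints and inner products, so the care should be concentrated on these two analytic points.
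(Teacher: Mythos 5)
Your proposal is correct, but there is nothing in the paper to measure it against: the paper does not prove this lemma at all. It is stated as a known characterization, attributed to the references \cite{CP,CZ,LMS}, and immediately put to use (in Remark 3.1 and in the proof of Lemma 3.2). What you have written is, in substance, the standard proof found in those references: the reachable set at time $T$ is the affine translate $S(l)\omega_{0}+\operatorname{Rang}(G_{Tl})$, so approximate controllability is equivalent to (a); the identity $\overline{\operatorname{Rang}(G_{Tl})}=\bigl(\ker G_{Tl}^{*}\bigr)^{\perp}$ together with the formula for the adjoint gives (a)$\iff$(b); the computation $\langle Q_{Tl}x,x\rangle=\|G_{Tl}^{*}x\|_{L^{2}}^{2}$ gives (b)$\iff$(c); and the strong limit $\alpha(\alpha I+Q_{Tl})^{-1}\to P_{\ker Q_{Tl}}$ gives (c)$\iff$(d). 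All four steps are sound.

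Two refinements are worth noting. First, to upgrade ``$B_{\theta}^{*}S^{*}(T-t)x=0$ for almost every $t$'' to ``for every $t$'' you only need strong continuity of the semigroup, which makes $t\mapsto B_{\theta}^{*}S^{*}(T-t)x$ continuous on $[T-l,T]$; analyticity of the eigenfunctions $\phi_{n}$ is not needed here --- in the paper it enters later, in the proof of Lemma 3.2, to propagate the vanishing from the control region $\theta$ to all of $[0,\pi]$. Second, the equivalence (c)$\iff$(d) can be done without the spectral theorem: if $Q_{Tl}x=0$ then $\alpha(\alpha I+Q_{Tl})^{-1}x=x$ for every $\alpha$, so (d) forces $\ker Q_{Tl}=\{0\}$; conversely, from $(\alpha I+Q_{Tl})^{-1}Q_{Tl}=I-\alpha(\alpha I+Q_{Tl})^{-1}$ one gets $\bigl\|\alpha(\alpha I+Q_{Tl})^{-1}Q_{Tl}y\bigr\|\le 2\alpha\|y\|\to 0$, and since $\bigl\|\alpha(\alpha I+Q_{Tl})^{-1}\bigr\|\le 1$ and $\overline{\operatorname{Rang}(Q_{Tl})}=(\ker Q_{Tl})^{\perp}=\mathcal{X}$, a density argument yields (d) on all of $\mathcal{X}$. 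Either route closes the one step you flagged as delicate.
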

\begin{rmq}
The Lemma $3.1$ implies that for all $x \in \mathcal{X}$ we have $G_{T l} u_{\alpha}=x-\alpha\left(\alpha I+Q_{T l}\right)^{-1} x,$ where
$$
u_{\alpha}=G_{T l}^{*}\left(\alpha I+Q_{T l}\right)^{-1} x, \quad \alpha \in(0,1].
$$
So, $\lim _{\alpha \longrightarrow 0} G_{T l} u_{\alpha}=x,$ and the error $E_{T l}x $ of this approximation is given by
$$
E_{T l} x=\alpha\left(\alpha I+Q_{T l}\right)^{-1} x, \quad \alpha \in(0,1],
$$
and the family of linear operators\\
$\Gamma_{\alpha T l}: \mathcal{X} \longrightarrow L^{2}(T-l, T ; \mathcal{U}),$ defined for $0<\alpha\leq 1$ by
$$
\Gamma_{\alpha T l} x = G_{T l}^{*}\left(\alpha I+Q_{T l}\right)^{-1} x,
$$
satisfies the following limit
$$
\lim _{\alpha \longrightarrow 0} G_{T l} \Gamma_{\alpha T l}=I,
$$
in the strong topology.
\end{rmq}
\begin{lem}
The linear heat equation \eqref{3.1} is approximately controllable on $[T-l,T ].$ Moreover, a sequence of controls steering the system \eqref{3.1} from an initial state $y_{0}$ to an $\varepsilon$ neighborhood of the final state $\omega^{1}$ at time $T > 0 ,$ is given by $\left\{u_{\alpha}^{l}\right\}_{0<\alpha \leq 1} \subset L^{2}(T-l, T ; \mathcal{U}),$ where
$$u_{\alpha}^{l}=G_{T l}^{*}\left(\alpha I+Q_{Tl}\right)^{-1}\left(w^{1}-S(l) y_{0}\right), \quad \alpha \in(0,1],$$
and the error of this approximation $E_{\alpha}$ is given by
$$E_{\alpha} = \alpha\left(\alpha I+Q_{T l}\right)^{-1} (w^{1}-S(l) y_{0}), $$

such that the solution $y(t)=y\left(t, T-l, y_{0}, u_{\alpha}^{l}\right)$ of the initial value problem
\begin{equation}\label{3.2}
\left\{\begin{array}{l}
y^{\prime}=-A y+B_{\theta} u_{\alpha}^{l}(t), \quad y \in \mathcal{X}, \quad t>0, \\
y(T-l)=y_{0},
\end{array}\right.
\end{equation}
satisfies
$$\lim _{\alpha \longrightarrow 0^{+}} y_{\alpha}^{l}\left(T, T-l, y_{0}, u_{\alpha}^{l}\right)=\omega^{1},$$
that is
$$\lim _{\alpha \longrightarrow 0^{+}} y_{\alpha}^{l}(T)=\lim _{\alpha \longrightarrow 0^{+}}\left\{S(l) y_{0}+\int_{T-l}^{T} S(T-s) B_{\theta} u_{\alpha}^{l}(s) \mathrm{d} s\right\}=\omega^{1}.$$

\end{lem}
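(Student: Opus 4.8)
The plan is to prove the statement in two stages: first establish the approximate controllability of \eqref{3.1} on $[T-l,T]$ by verifying criterion (b) of Lemma 3.1, and then derive the explicit control sequence together with its convergence from criterion (d) and a short algebraic computation.

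For the first stage I would exploit that $A$ is self-adjoint with the orthonormal eigenbasis $\{\phi_n\}$, so that $S^{*}(t)=S(t)$, and that the multiplication operator $B_\theta=\mathbf{1}_\theta$ is also self-adjoint, whence $B_\theta^{*}=B_\theta$. Criterion (b) then reads: if $\mathbf{1}_\theta S(T-t)x=0$ for every $t\in[T-l,T]$, then $x=0$. Fixing any single $\tau=T-t\in(0,l]$ and writing $c_n=\langle x,\phi_n\rangle$, the hypothesis says $\sum_{n\ge 1}e^{-n^{2}\tau}c_n\sin(n\xi)=0$ for a.e. $\xi\in\theta$. The crucial observation is that, because of the super-exponential decay of the coefficients $e^{-n^{2}\tau}$ for $\tau>0$ together with the analyticity of $\xi\mapsto\sin(n\xi)$, the function $\xi\mapsto (S(\tau)x)(\xi)$ extends to an entire function of $\xi$; vanishing on the nonempty open set $\theta$ forces it to vanish identically on $[0,\pi]$ by the identity theorem. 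Hence $S(\tau)x=0$ in $\mathcal{X}$, and since every eigenvalue $e^{-n^{2}\tau}$ is nonzero, $S(\tau)$ is injective, giving $c_n=0$ for all $n$, i.e. $x=0$. I expect this analyticity step --- upgrading ``vanishes a.e.\ on $\theta$'' to ``vanishes everywhere'' --- to be the main obstacle and the only place where the specific structure of the heat semigroup and the fact that $\theta$ is open are genuinely used.

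Once approximate controllability is known, Lemma 3.1 supplies the equivalent statement (d), namely $\lim_{\alpha\to 0^{+}}\alpha(\alpha I+Q_{Tl})^{-1}z=0$ for every $z\in\mathcal{X}$. For the second stage I would evaluate the mild solution of \eqref{3.2} at time $T$: since $y(T)=S(l)y_0+G_{Tl}(u_\alpha^{l})$ and $u_\alpha^{l}=G_{Tl}^{*}(\alpha I+Q_{Tl})^{-1}z$ with $z=\omega^{1}-S(l)y_0$, the definition $Q_{Tl}=G_{Tl}G_{Tl}^{*}$ yields $G_{Tl}(u_\alpha^{l})=Q_{Tl}(\alpha I+Q_{Tl})^{-1}z$. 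The elementary operator identity $Q_{Tl}(\alpha I+Q_{Tl})^{-1}=I-\alpha(\alpha I+Q_{Tl})^{-1}$ then gives
\[
y(T)-\omega^{1}=-\alpha(\alpha I+Q_{Tl})^{-1}\bigl(\omega^{1}-S(l)y_0\bigr)=-E_\alpha .
\]

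Applying (d) with the particular choice $z=\omega^{1}-S(l)y_0$ shows $\lim_{\alpha\to 0^{+}}E_\alpha=0$, hence $\lim_{\alpha\to 0^{+}}y_\alpha^{l}(T)=\omega^{1}$, which is exactly the asserted convergence and simultaneously identifies the approximation error as $E_\alpha$. The remaining verifications --- that $u_\alpha^{l}\in L^{2}(T-l,T;\mathcal{U})$ and that $(\alpha I+Q_{Tl})^{-1}$ is a well-defined bounded operator for $\alpha\in(0,1]$ --- are routine, the latter following since $Q_{Tl}$ is self-adjoint and nonnegative, so $\alpha I+Q_{Tl}$ is boundedly invertible.
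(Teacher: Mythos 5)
Your proposal is correct, and its second stage (the Grammian algebra $y(T)-\omega^{1}=-\alpha(\alpha I+Q_{Tl})^{-1}(\omega^{1}-S(l)y_0)$ followed by criterion (d)) is exactly what the paper does via Remark 3.1. However, your proof of the key observability step --- condition (b) of Lemma 3.1 --- takes a genuinely different route. The paper keeps the whole time interval: from $\sum_{n}e^{-n^{2}t}\langle\xi,\phi_n\rangle\phi_n(x)=0$ for all $t\in[0,l]$ and $x\in\theta$, it invokes Lemma 3.14 of Curtain--Pritchard (uniqueness for Dirichlet series with distinct exponents $n^2$) to separate the modes and conclude $\langle\xi,\phi_n\rangle\phi_n(x)=0$ on $\theta$ for each $n$ individually, and only then uses analyticity of each $\sin(n\xi)$ to kill each coefficient. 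You instead freeze a single time $\tau\in(0,l]$, use the super-exponential decay $e^{-n^{2}\tau}$ to show the whole solution $S(\tau)x$ extends to an entire function of the spatial variable, apply the identity theorem on the open set $\theta$, and then recover all modes at once from the injectivity of $S(\tau)$. Your version is self-contained (no appeal to the external Dirichlet-series lemma) and proves something formally stronger --- observability from a single time instant rather than from the interval $[T-l,T]$ --- at the price of a complex-analytic extension argument; the paper's version stays with real-variable tools and a citable standard lemma, and its mode-by-mode structure is the one that generalizes when the eigenfunction decay is too weak for an entire extension. Both arguments are sound, and both exploit the two essential features (analyticity of the eigenfunctions and openness of $\theta$), just in different orders.
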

\begin{proof}
We shall apply condition (b) from the foregoing Lemma. In fact, It is clear that $S^{*}(t) = S(t)$, $B_{\theta} ^{*} = B_{\theta}$. We suppose that
$
B_{\theta} ^{*}S^{*}(\tau-t)\xi={0}, \quad t \in [T-l,T],
$
which means,
$$
\sum_{n = 1}^{\infty} e^{-n^2(T-t)} <\xi, \phi_{n}> B_{\theta} \phi_{n}=0, \quad t \in [T-l,T],
$$
then,
$$
\sum_{n = 1}^{\infty} e^{-n^2(T-t)} <\xi, \phi_{n}> \mathbf{1}_{\theta} \phi_{n}=0, \quad t \in [T-l,T].
$$
hence,
$$
\sum_{n = 1}^{\infty} e^{-n^2(T-t)} <\xi, \phi_{n}> \phi_{n}(x)=0, \quad t \in [T-l,T], \quad  x \in \theta.
$$
then,
$$
\sum_{n = 1}^{\infty} e^{-n^2 t} <\xi, \phi_{n}> \phi_{n}(x)=0, \quad t \in [0,l], \quad  x \in \theta.
$$
From Lemma 3.14 from \cite{CP}, we get that
$$
<\xi, \phi_{n}> \phi_{n}(x)=0, \quad x \in \theta.
$$
Now, since $\phi_{n}(x) = \sin(nx)$ are analytic functions , we get that $<\xi, \phi_{n}>\phi_{n}(x) =0, \quad \forall x \in [0,\pi], \quad n= 1,2, \dots; $. This implies that
$$
<\xi, \phi_{n}> =0, \quad n= 1,2, \dots.
$$
 Since $\{\phi_{n}\}$ is a complete orthonormal set on $\mathcal{X}$, we conclude that $\xi=0$. This completes the proof of the approximate controllability of the linear system \eqref{3.1}.
The remaind of the prove follows from the above characterization of  dense range operators.
\end{proof}
\section{Approximate Controllability of the Semilinear System}\label{sec4}
In this section, we shall prove the main result of this paper, the interior approximate controllability of the non instantaneous impulsive semi-linear heat equation with memory and delay \eqref{1.1}.

For all $h \in \mathcal{PW}$ and $u \in L^{2}([0, T] ;\mathcal{U}),$ the initial value problem  \eqref{2.2} admit only one mild solution given by (\ref{Mild}), and its evaluation in $T$ leads us to the following expression
\begin{equation*}
\begin{array}{lll}
\displaystyle \omega(T) &=& S(T-s_{N})G_{N}^{1}(s_{N},\omega(s_{N}),u(s_{N})) + \displaystyle\int_{s_{N}}^{T} S(T-s) \bigg( \mathbf{B}_{\theta} u(s)+ f^{2}(s,\omega,u(s))\bigg)\mathrm{d} s \\[3mm]
&=&  S(T-s_{N})G_{N}^{1}(s_{N},\omega(s_{N}),u(s_{N})) + \displaystyle\int_{s_{N}}^{T} S(T-s) \mathbf{B}_{\theta} u(s)\mathrm{d} s\\
&+&\displaystyle \int_{s_{N}}^{T}S(T-s) \bigg( \displaystyle \int_{0}^{s} M(s,m) g^{1}(\omega_{m}(-r)) \mathrm{d} m+ f^{1}(s,\omega_{s}(-r),u(s))\bigg) \mathrm{d} s.
\end{array}
\end{equation*}
Now, we are ready to present and prove the main result of this paper.
\begin{thm}
Assume the existence of a function $\rho \in \mathcal{C} \left(\mathbb{R}_{+}, \mathbb{R}_{+} \right)$ which for all $(t, \Phi,u) \in [0, T] \times \mathcal{PW}(-r,0;  \mathcal{X}) \times L^{2}([0, T] ;\mathcal{U}),$ the following inequality holds
\begin{equation}\label{4.2}
\norm{f^1(t,\Phi,u)}_{\mathcal{X}}  \leq    \rho(\| \Phi \|_{\mathcal{X}}).
\end{equation}
Then, the non instantaneous impulsive semilinear heat eqution \eqref{1.1} with memory and delay is approximately controllable on $[0, T].$

\end{thm}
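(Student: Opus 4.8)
The plan is to avoid any fixed point theorem and instead combine two features of the problem: the steering control produced in Lemma 3.2 is \emph{explicit} and only has to act on a short terminal window, and the delay $r>0$ allows the nonlinearity to be frozen along a reference trajectory on that window. First I would fix a window length $l$ with $0<l<\min\{r,\,T-s_N\}$, so that $(T-l,T]$ contains no impulse point and every delayed time $s-r$ with $s\in(T-l,T]$ satisfies $s-r\le T-r<T-l$. On $[0,T-l]$ I take the control equal to $0$; by hypothesis \textbf{H} this determines a unique mild solution $\bar\omega$ on $[-r,T-l]$ (the ``fixed curve''), independent of anything done later, with terminal value $z_l:=\bar\omega(T-l)$. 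Since the window is impulse-free, for any control $u$ supported in $[T-l,T]$ the mild solution obeys
\[
\omega(T)=S(l)z_l+\int_{T-l}^{T}S(T-s)B_\theta u(s)\,\mathrm{d}s+\int_{T-l}^{T}S(T-s)f^2(s,\omega,u)\,\mathrm{d}s ,
\]
where $z_l$ and the state-arguments entering $f^2$ on $(T-l,T]$ are those of $\bar\omega$.

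Next I would choose the explicit control of Lemma 3.2, namely $u_\alpha^l=G_{Tl}^{*}(\alpha I+Q_{Tl})^{-1}(\omega^{1}-S(l)z_l)$ extended by $0$ onto $[0,T-l]$. By Remark 3.1 one has $G_{Tl}u_\alpha^l=(\omega^{1}-S(l)z_l)-\alpha(\alpha I+Q_{Tl})^{-1}(\omega^{1}-S(l)z_l)$, so substituting into the identity above yields
\[
\omega(T)-\omega^{1}=-\,\alpha(\alpha I+Q_{Tl})^{-1}\bigl(\omega^{1}-S(l)z_l\bigr)+\int_{T-l}^{T}S(T-s)f^2(s,\omega,u_\alpha^l)\,\mathrm{d}s .
\]
The first term tends to $0$ as $\alpha\to 0^{+}$ by part (d) of Lemma 3.1, which applies because the linear system is approximately controllable on $[T-l,T]$ by Lemma 3.2.

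The heart of the argument is to bound the remaining integral \emph{uniformly in} $\alpha$. Here the delay is decisive: for $s\in(T-l,T]$ every state argument inside $f^2$, i.e.\ the memory argument $\omega(m-r)$ for $m\in[0,s]$ and the delayed argument $\omega(s-r)$, lies in $[-r,T-r]\subset[-r,T-l]$, hence equals the fixed curve $\bar\omega$ (or the datum $h$) and never sees the control. Therefore, for all such $s$,
\[
\norm{f^2(s,\omega,u_\alpha^l)}_{\mathcal{X}}\le \norm{M}_{\infty}\int_{0}^{T}\norm{g^{1}(\bar\omega(m-r))}_{\mathcal{X}}\,\mathrm{d}m+\rho\!\left(\norm{\bar\omega(s-r)}_{\mathcal{X}}\right)=:C,
\]
and the essential point is that the growth bound \eqref{4.2} on $f^1$ is uniform in $u$, so $C$ is a finite constant independent of $\alpha$ and of $l$. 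Using $\norm{S(t)}\le 1$, the integral is then at most $Cl$. Given $\varepsilon>0$, I would first pick $l<\min\{r,\,T-s_N\}$ with $Cl<\varepsilon/2$, and then let $\alpha\to0^{+}$ to force the linear error below $\varepsilon/2$, giving $\norm{\omega(T)-\omega^{1}}_{\mathcal{X}}<\varepsilon$.

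The main obstacle is precisely this uniform control of $\int_{T-l}^{T}S(T-s)f^2\,\mathrm{d}s$: because $u_\alpha^l$ typically blows up as $\alpha\to0^{+}$, a careless estimate of $f^2$ would blow up as well. The two ingredients that defeat this, the $u$-independent bound \eqref{4.2} and the delay shift $l<r$ that pins the nonlinear arguments onto the already-fixed curve, are exactly what make the order of limits ($l$ first, then $\alpha$) legitimate and let the proof bypass any fixed point machinery.
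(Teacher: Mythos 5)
Your proposal is correct and follows essentially the same strategy as the paper's own proof: fix a terminal window of length $l<\min\{r,\,T-s_N\}$ so the delay pins every state argument of the nonlinearity to the already-determined trajectory, use the explicit control $G_{Tl}^{*}(\alpha I+Q_{Tl})^{-1}(\omega^{1}-S(l)\omega(T-l))$ from Lemma 3.2, bound the nonlinear integral by a constant times $l$ uniformly in $\alpha$ via hypothesis \eqref{4.2}, and then send $\alpha\to 0^{+}$. The only cosmetic differences are that you take the control to be zero on $[0,T-l]$ where the paper allows an arbitrary $u$, and you invoke Remark 3.1 and Lemma 3.1(d) directly instead of comparing with the linear solution $y_{\alpha}^{l}$, which amounts to the same estimate.
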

\begin{proof}
Given $\varepsilon >0,$ $h \in \mathcal{PW} $ and a final state $w^{1} \in \mathcal{X}$, we want to find a control $u_{\alpha}^{l} \in L^{2}(0, T ; \mathcal{U})$ such that
\begin{equation*}
\left\|\omega^{\alpha,l}(T)-\omega^{1}\right\|_{\mathcal{X}} <\varepsilon.
\end{equation*}
We start by considering $u \in L^{2}(0, T ; \mathcal{U})$ and its corresponding mild solution $\omega(t) = \omega(t,0,h,u),$ of the initial value problem \eqref{2.2}. For $0< \alpha <1 $
 and $0<l<\min\lbrace T-s_{N}, r\rbrace$  small enough. We define the control sequence $u_{\alpha}^{l} \in L^{2}(0, T ; \mathcal{U})$ as follows
 \begin{equation}
 u_{\alpha}^{l}(t)=\left\{\begin{array}{ll}
u(t), & \text { if } \quad 0 \leq t \leq T-l, \\
u_{\alpha}(t), & \text { if } \quad T-l<t \leq T,
\end{array}\right.
\end{equation}
where
\begin{equation}
u_{\alpha}(t)=B_{\theta}^{*} S^{*}(T-t)\left(\alpha I+Q_{T l}\right)^{-1}\left(\omega^{1}-S(l) \omega(T-l)\right), \quad T-l<t \leq T.
\end{equation}
The corresponding solution $\omega^{\alpha,l}=\omega(t,s_{N},G_{N}^{1},u_{\alpha}^{l})$ of the initial value problem \eqref{2.2} at time $T$ can be written as follows.
\begin{equation*}
\begin{array}{lll}
 \omega^{\alpha ,l}(T) &=&  S(T-s_{N})G_{N}^{1}(s_{N},\omega^{\alpha ,l}(s_{N}),u_{\alpha}^{l}(s_{N})) + \displaystyle\int_{s_{N}}^{T} S(T-s) \bigg[ \mathbf{B}_{\theta} u_{\alpha}^{l}(s)\\[3mm]
&+& \displaystyle \int_{0}^{s} M(s,m) g^{1}(\omega^{\alpha ,l}_{m}(-r)) \mathrm{d} m+ f^{1}(s,\omega^{\alpha ,l}_{s}(-r),u_{\alpha}^{l}(s))\,\bigg] \mathrm{d} s\\[3mm]
&=&S(l) \bigg\lbrace S(T-s_{N}-l) \,\,G_{N}^{1}(s_{N},\omega^{\alpha ,l}(s_{N}),u_{\alpha}^{l}(s_{N})) + \displaystyle\int_{s_{N}}^{T-l} S(T-s-l)\\[3mm]
&+&\bigg[ \mathbf{B}_{\theta} u_{\alpha}^{l}(s) \displaystyle \int_{0}^{s} M(s,m) g^{1}(\omega^{\alpha ,l}_{m}(-r)) \mathrm{d} m\,+ f^{1}(s,\omega^{\alpha ,l}_{s}(-r),u_{\alpha}^{l}(s))\,\bigg] \mathrm{d} s \bigg\rbrace\\[3mm]
&+&\displaystyle\int_{T-l}^{T} S(T-s)\bigg[ \mathbf{B}_{\theta} u_{\alpha}(s) + \displaystyle \int_{0}^{s} M(s,m) g^{1}(\omega^{\alpha ,l}_{m}(-r)) \mathrm{d} m\\
&+& f^{1}(s,\omega^{\alpha ,l}_{s}(-r),u_{\alpha}(s))\bigg] \mathrm{d} s,
\end{array}
\end{equation*}
then
\begin{equation*}
\begin{array}{lll}
 \omega^{\alpha ,l}(T)&=& S(l)\omega(T-l)+\displaystyle\int_{T-l}^{T} S(T-s)\bigg[ \mathbf{B}_{\omega}
u_{\alpha} + \displaystyle \int_{0}^{s} M(s,m) g^{1}(\omega^{\alpha ,l}_{m}(-r)) \mathrm{d} m\\[3mm]
&+& f^{1}(s,\omega^{\alpha ,l}_{s}(-r),u_{\alpha}(s))\bigg] \mathrm{d} s.
\end{array}
\end{equation*}
On the other hand, the corresponding solution $y_{\alpha}^{l}(t)=y\left(t, T-l , \omega(T-l), u_{\alpha}\right)$ of the linear value problem \eqref{3.2} at time T is given by
\begin{equation}
y_{\alpha}^{l}(T)=S(l) \omega(T-l)+\int_{T-l}^{T} S(T-s)\mathbf{B}_{\omega} u_{\alpha}(s) \mathrm{d} s.
\end{equation}
Therefore,
\begin{equation*}
\omega^{\alpha ,l}(T) -y_{\alpha}^{l}(T)=\displaystyle\int_{T-l}^{T} S(T-s)\bigg[\displaystyle \int_{0}^{s} M(s,m) g^{1}(\omega^{\alpha ,l}_{m}(-r)) \mathrm{d} m
+ f^{1}(s,\omega^{\alpha ,l}_{s}(-r),u_{\alpha}(s))\bigg] \mathrm{d} s,
\end{equation*}
by the hypothesis \eqref{4.2}, we obtain
\begin{equation*}
\begin{aligned}
\left\|\omega^{\alpha ,l}(T) -y_{\alpha}^{l}(T)\right\| \leq& \int_{T-l}^{T}\|S(T-s)\| \int_{0}^{s}\left\|M(s, m) g^{1}\left(\omega^{\alpha ,l}_{m}(-r) \right)\right\| \mathrm{d} m\, \mathrm{d} s\\
&+ \int_{T-l}^{T}\|S(T-s)\| \rho\left(\left\|\omega^{\alpha ,l}_{s}(-r)\right\|\right) d s,
\end{aligned}
\end{equation*}
since $0\leq m \leq s,$ $0<l<r,$ and $T-l\leq s \leq T,$ then  $m-r \leq s-r \leq T-r< T-l,$ then
\begin{equation}
\omega^{\alpha ,l}(m-r)=\omega(m-r) \text { and } \omega^{\alpha ,l}_{s}(-r)=\omega(s-r),
\end{equation}
therefore, for a sufficiently small $l$ we obtain
\begin{equation*}
\begin{aligned}
\left\|\omega^{\alpha ,l}(T) -y_{\alpha}^{l}(T)\right\| \leq& \int_{T-l}^{T}\|S(T-s)\| \int_{0}^{s}\left\|M(s, m) g^{1}\left(\omega_{m}(-r)\right)\right\| \mathrm{d} m\, \mathrm{d} s\\
&+ \int_{T-l}^{T}\|S(T-s)\| \rho\left(\left\|\omega(s-r)\right\|\right) \mathrm{d} s \\
&\leq \frac{\varepsilon}{2}.
\end{aligned}
\end{equation*}
Hence, by \textit{lemma 3.2} we can choose $\alpha >0$ such that
\begin{equation*}
\begin{aligned}
\left\|\omega^{\alpha ,l}(T)-\omega^{1}\right\| & \leq\left\|\omega^{\alpha ,l}(T) -y_{\alpha}^{l}(T)\right\|+\left\|y_{\alpha}^{l}(T)-\omega^{1}\right\| \\
&<\frac{\varepsilon}{2}+\frac{\varepsilon}{2} = \varepsilon.
\end{aligned}
\end{equation*}

\end{proof}

\begin{obs}
In particular of function $\rho$ from condition \eqref{4.2}, is $\rho(\xi)=e(\xi)^\beta + \eta$, with $\beta \geq 1$.
\end{obs}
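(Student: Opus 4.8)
This final statement is a remark recording a concrete admissible choice for the comparison function $\rho$ appearing in Theorem 4.1, namely $\rho(\xi)=e\,\xi^{\beta}+\eta$ with constants $e,\eta>0$ and exponent $\beta\geq1$. The plan is to justify two things: first, that such a $\rho$ genuinely belongs to the class $\mathcal{C}(\R_{+},\R_{+})$ demanded by \eqref{4.2}; and second, that this choice is compatible with the proof of Theorem 4.1, so that approximate controllability is retained even when $\rho$ grows superlinearly (i.e. $\beta>1$).

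The first point is immediate: for $e,\eta>0$ and $\beta\geq1$ the map $\xi\mapsto e\,\xi^{\beta}+\eta$ is continuous on $\R_{+}$ and takes values in $\R_{+}$, hence $\rho\in\mathcal{C}(\R_{+},\R_{+})$. Consequently any nonlinearity obeying $\norm{f^{1}(t,\Phi,u)}_{\mathcal{X}}\leq e\,\norm{\Phi}_{\mathcal{X}}^{\beta}+\eta$ fits hypothesis \eqref{4.2}, and it suffices to confirm that the proof of Theorem 4.1 is unaffected by this particular form.

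For the second point I would revisit the single place in the proof of Theorem 4.1 where $\rho$ actually intervenes, namely the term $\int_{T-l}^{T}\norm{S(T-s)}\,\rho\!\left(\norm{\omega(s-r)}\right)\mathrm{d}s$, in which $\omega$ is the \emph{fixed} mild solution selected before the regularising parameter $\alpha$, hence independent of $\alpha$. Because $l<\min\{T-s_{N},r\}$ is taken small, for $s\in[T-l,T]$ one has $s-r<T-l$, so the argument $\omega(s-r)$ lies on the already-determined portion of the trajectory. Since $\omega\in\mathcal{PC}(\mathcal{X})$ is bounded on $[-r,T]$ and $\rho$ is continuous, the quantity $\sup_{s\in[T-l,T]}\rho\!\left(\norm{\omega(s-r)}\right)$ is finite; together with the estimate $\norm{S(T-s)}\leq 1$ this yields $\int_{T-l}^{T}\norm{S(T-s)}\,\rho\!\left(\norm{\omega(s-r)}\right)\mathrm{d}s\longrightarrow0$ as $l\to0^{+}$, which is exactly what is needed to bound $\norm{\omega^{\alpha,l}(T)-y^{l}_{\alpha}(T)}$ by $\varepsilon/2$.

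The only conceptual subtlety, and the point I would emphasise, is that no bound on the growth exponent $\beta$ is required: the pull-back mechanism evaluates $\rho$ solely along the fixed solution over the short window $[T-l,T]$, never along the $\alpha$-dependent family, so even superlinear polynomial growth ($\beta>1$) leaves the estimate intact. There is thus essentially no obstacle beyond continuity, and Theorem 4.1 applies verbatim with $\rho(\xi)=e\,\xi^{\beta}+\eta$, which is precisely the content of the remark.
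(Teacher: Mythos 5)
Your proposal is correct, and it is worth noting that the paper offers \emph{no} proof of this remark at all: it is stated bare, immediately after Theorem 4.1, so what you have written is precisely the justification the authors leave implicit. Your two-step structure is the right one. First, admissibility is indeed trivial: the hypothesis \eqref{4.2} asks only for \emph{some} $\rho\in\mathcal{C}(\R_{+},\R_{+})$ dominating $\norm{f^{1}(t,\Phi,u)}_{\mathcal{X}}$, and $\xi\mapsto e\,\xi^{\beta}+\eta$ with $e,\eta>0$, $\beta\geq 1$ plainly qualifies. Second — and this is the genuine content of the remark — you correctly locate why no growth restriction on $\beta$ is needed: in the proof of Theorem 4.1 the function $\rho$ is evaluated only at $\norm{\omega^{\alpha,l}_{s}(-r)}=\norm{\omega(s-r)}$ for $s\in[T-l,T]$, where the delay forces $s-r\leq T-r<T-l$, so (by uniqueness of the mild solution under hypothesis H, since $u_{\alpha}^{l}=u$ on $[0,T-l]$) the argument lies on the fixed, $\alpha$-independent trajectory. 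Boundedness of $\omega\in\mathcal{PC}(\mathcal{X})$ on $[-r,T]$, continuity of $\rho$, and $\norm{S(T-s)}\leq e^{-(T-s)}\leq 1$ then make the offending integral $O(l)$, which is exactly how the paper reaches the bound $\varepsilon/2$. Your closing observation — that this is in contrast to fixed-point approaches, where superlinear growth of the nonlinearity would typically obstruct the argument — accurately reflects the point the authors advertise in the abstract (``a technique that avoids fixed point theorems''), so Theorem 4.1 does apply verbatim with this polynomial $\rho$.
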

\subsection{Open Problem}
Our first open problem is the  semilinear Non-autonomous  differential equations with non instantaneous impulses, memory and delay
\begin{equation*}\label{5.1}
\left\{
\begin{array}{lll}
\displaystyle \omega^{'}(t) = A(t) \omega(t) +B(t) u(t)+ \int_{0}^{t} M(t,s) g(\omega_{s}) ds+ f(t,\omega_{t},u(t))\,\mathrm{d} s, &in& \bigcup\limits_{i=0}^{N}\left(s_{i}, t_{i+1}\right], \\
\displaystyle\omega(s,x) = h(s,x), &in& [-r,0], \\
\displaystyle\omega(t)= G_{i}(t,\omega(t),u(t)),  &in& \bigcup\limits_{i=0}^{N} (t_{i},s_{i}],\;\;\;\;
\end{array}
\right.
\end{equation*}
where $A(t),$ $B(t)$ are continuous matrices of dimension $n\times n$ and $n\times m$ respectively, the control function $u$ belongs to $\mathcal{C}(0,T;\R^{m}) ,$ $ h \in \mathcal{PW}(-r,0;\R^{n}),$ $f :[0,T]\times \mathcal{PW}(0,T;\R^{n})\times \R^{m} \longrightarrow \R^{n} ,$ $g : \R^{n} \longrightarrow \R^{n}, $ $ M \in \mathcal{C}(0,T;\R^{n}) $ $G_{i} :(t_{i},s_{i}]\times \R^{n}\times \R^{m} \longrightarrow \R^{n} .$
\subsection{Open Problem}
Second open problem is about Controllability of non instantaneous   semilinear beam equation with memory and delay in a bounded domain $\Omega \subseteq\mathbb{R}^{N},$
\begin{equation*}
\left\{\begin{array}{ll}
z^{''}-2 \gamma \Delta z^{'}+\Delta^{2} z=u(t, x)+f\left(t, z(t-r), z^{'}(t-r), u\right) &\text{in}\,\, \bigcup\limits_{i=0}^{N}\left(s_{i}, t_{i+1}\right],\\
\displaystyle+\int_{0}^{t} g(t-s) h(z(s-r, x)) \mathrm{d} s,\\
z(t, x)=\Delta z(t, x)=0, & \text{on}\,\, \partial \Omega, \\
z(s, x)=\varphi_{1}(s, x), & \text{in}\,\,[-r, 0], \\
z^{'}(s, x)=\varphi_{2}(s, x), \\
\displaystyle z^{'}= \psi_{i}(t,z(t),z^{'}(t),u(t)),  &\text{in}\,\, \bigcup\limits_{i=0}^{N}\left(t_{i}, s_{i}\right],
\end{array}\right.
\end{equation*}
the damping coefficient $\gamma > 1,$ and the
real-valued functions  $ z = z(t,x) $ in $[0,T] \times \Omega$ represents the beam deflection, $u$ in  $[0,T] \times \Omega$ is the distributed control, $g$ acts as convolution kernel with respect to the time variable.
\subsection{Open Problem}
Another example where this technique may be applied is the
strongly damped wave equation with Dirichlet boundary conditions with non instantaneous impulses, memory and delay in
\begin{equation*}
\left\{\begin{array}{ll}
\displaystyle y^{''} +\beta(-\Delta)^{1/2} y^{'}  + \gamma (-\Delta) y = \mathbf{1}_{\theta}u+\displaystyle\int_{0}^{t} h(s,y(s-r),u(s)) \mathrm{d} s, &\text{in}\,\, \bigcup\limits_{i=0}^{N}\left(s_{i}, t_{i+1}\right],\\
y = 0, & \text{on}\,\, \partial \Omega ,\\
y(s)=\phi_{1}(s), &\text{in}\,\,[-r, 0], \\
y^{'}(s)=\phi_{2}(s), &\text{in}\,\,[-r, 0], \\
\displaystyle y^{'}(t)= g_{i}(t,y(t),y^{'}(t),u(t)),&\text{in}\,\,\bigcup\limits_{i=0}^{N} (t_{i},s_{i}].
\end{array}\right.
\end{equation*}
\\
In the space $\mathcal{Z}_{1/2} = \mathcal{D}( (-\Delta)^{1/2}) \times L^{2}(\Omega),$ where $\Omega$ is a bounded domain in $\mathbb{R}^{N},$ $\theta$ is an open nonempty subset of $\Omega,$ $\mathbf{1}_{\theta}$ denotes the characteristic function of the set $\theta,$ the distributed control $u \in L^{2}(0,T; L^{2}(\Omega)). $ $\phi_{1},\phi_{2}$ are piecewise continuous functions.

\section*{Acknowledgment}
The authors would like to express their thanks to the editor and anonymous referees for constructive comments and suggestions that improved the quality of this manuscript, they also declare the presentation of the manuscript as a preprint \cite{Zouhair3}.


\begin{thebibliography}{00}
\bibitem{SKSMA}
\newblock  H. M. Ahmed,  M. M. El-Borai,  A. S. El Bab, and M. E. Ramadan. Approximate controllability of noninstantaneous impulsive Hilfer fractional integrodifferential equations with fractional Brownian motion. Bound. Value Probl., 2020(1),(2020), 1-25.

\bibitem{AK} 
\newblock A. Anguraj and S. Kanjanadevi. Existence of mild solutions of abstract fractional differential equations with non-instantaneous impulsive conditions. J. Stat. Sci. Appl, 4(01-02), (2016), 53-64.

\bibitem{RPASHDO} 
\newblock R. Agarwal, D. O’regan and S. Hristova. Stability by Lyapunov like functions of nonlinear differential equations with non-instantaneous impulses. J. Appl. Math. Comput., 53(1), (2017), 147-168.

\bibitem{DULC} 
\newblock A. Ben Aissa and W. Zouhair, Qualitative properties for the 1-D impulsive wave equation: controllability and observability, Quaest. Math., (2021), doi: \href{https://doi.org/10.2989/16073606.2021.1940346}{10.2989/16073606.2021.1940346}. 

\bibitem{Zouhair3}
\newblock S. E. Chorfi, G. El. Guermai, L. Maniar and W. Zouhair, Logarithmic convexity and impulsive controllability for the one-dimensional heat equation with dynamic boundary conditions,
IMA J. Math. Control Inf, (2022), doi: \href{https://doi.org/10.1093/imamci/dnac013}{10.1093/imamci/dnac013}.

\bibitem{Zouhair4} 
\newblock S. E. Chorfi, G. El. Guermai, L. Maniar and W. Zouhair, Impulsive null approximate controllability for heat equation with dynamic boundary conditions, Math. Control Relat. Fields (2022), doi: \href{https://www.aimsciences.org/article/doi/10.3934/mcrf.2022026}{10.3934/mcrf.2022026}.

\bibitem{CP} 
\newblock  R. F. Curtain and A. J. Pritchard. Infinite dimensional linear systems theory. Berlin, Heidelberg: Springer Berlin Heidelberg, (1978).

\bibitem{CZ} 
\newblock R. F. Curtain and H. Zwart. An introduction to infinite-dimensional linear systems theory (Vol. 21). Springer Science and Business Media, (2012).

\bibitem{GL} 
\newblock C. Guevara, and H. Leiva. Controllability of the impulsive semilinear heat equation with memory and delay. Journal of Dynamical and Control Systems, 24(1), (2018), 1-11.

\bibitem{HO} 
\newblock E. Hernández and D. O’Regan. On a new class of abstract impulsive differential equations. Proceedings of the American Mathematical Society, 141(5),(2013), 1641-1649.


\bibitem{VKMM} 
\newblock V. Kumar, and M. Malik. Controllability results of fractional integro-differential equation with non-instantaneous impulses on time scales. IMA J. Math. Control Inf., 38(1),(2021), 211-231.

\bibitem{Zouhair2} 
\newblock  S. Lalvay, A. Padilla-Segarra and W. Zouhair. On the existence and uniqueness of solutions for non-autonomous semi-linear systems with non-instantaneous impulses, delay, and non-local conditions. Miskolc Math. Notes., 23 (1), (2022), 295-310.

\bibitem{Lakshmikantham}
 \newblock V. Lakshmikantham, D. D. Bainov, and P. S. Simeonov, Theory of Impulsive Differential Equations, 1989, doi:\href{https://doi.org/10.1142/0906}{10.1142/0906}
 
\bibitem{Zouhair} 
\newblock  H. Leiva, D. Cabada, and W. Zouhair. Existence, uniqueness and controllability analysis of Benjamin-Bona-Mahony equation with non instantaneous impulses, delay and non local conditions IMA J. Math. Control Inf. 7, (2021), 91-108.

\bibitem{LMS}
\newblock H. Leiva, N. Merentes and J. Sanchez. A characterization of semilinear dense range operators and applications. Abstr. Appl. Anal. (Vol. 2013). Hindawi, 2013.

\bibitem{Zouhair3}
\newblock H. LEIVA, W. Zouhair, M.N. Entekhabi and L. Delgado. Approximate controllability of semi-linear heat equation with non instantaneous impulses, memory and delay (2022), arXiv:\href{https://arxiv.org/abs/2008.02094}{2008.02094}. 

\bibitem{liu}
\newblock X. Liu,. Practical stabilization of control systems with impulse effects. J. Math. Anal. Appl., 166(2),(1992), 563-576.

\bibitem{farzana} 
\newblock F. A. Mcrae,. Practical stability of impulsive control systems. J. Math. Anal. Appl., 181(3), (1994), 656-672.

\bibitem{Milman}
\newblock V.D. Milman and A.D. Myshkis, On the stability of motion in the presence of impulses, Sibirskii Matematicheskii Zhurnal 1 (1960), 233-237.

\bibitem{SQG} 
\newblock S. Qin and G. Wang. Controllability of impulse controlled systems of heat equations coupled by constant matrices. J. Differ. Equ., 263(10), (2017) 6456-6493.

\bibitem{KDP}
\newblock K. D. Phung, G. Wang and Y. Xu. Impulse output rapid stabilization for heat equations. J. Differ. Equ., 263(8),(2017), 5012-5041.

\bibitem{Pierri}
\newblock M. Pierri, D. O’Regan and V. Rolnik. Existence of solutions for semi-linear abstract differential equations with not instantaneous impulses. Appl. Math. Comput., 219(12), (2013), 6743-6749.

\bibitem{Samoilenko}
 \newblock A. M. Samoilenko and N. A. Perestyuk, Impulsive Differential Equations. WORLD SCIENTIFIC, aug 1995. doi:\href{https://doi.org/10.1142/2892}{10.1142/2892}

\bibitem{Yang}
 \newblock T. Yang, Impulsive Control Theory. Berlin, Heidelberg: Springer Berlin Heidelberg, 2001. doi:\href{https://doi.org/10.1007/3-540-47710-1}{10.1007/3-540-47710-1}

\end{thebibliography}
\end{document}